\theoremstyle{plain}
\newtheorem{thm}{Theorem}[section]
\newtheorem{lem}[thm]{Lemma}
\newtheorem{prop}[thm]{Propsition}
\theoremstyle{definition}
\theoremstyle{remark}
\newtheorem{remark}[thm]{Remark}
\newtheoremstyle{exerstyle}{3pt}{3pt}{\itshape}{}{\bfseries}{}{2mm}{}
\theoremstyle{exerstyle}
\begin{document}

\title {\bf The limit of the m-norms of a class of symmetric matrices and its applications}
\author{\it Haifeng Xu\thanks{This work is partially supported by National Natural Science Foundation of China (NSFC), Tianyuan fund for Mathematics, No. 11126046, the Foundation of Yangzhou University 2013CXJ006, 2013CXJ001 and the University Science Research Project of Jiangsu Province (13KJB110029).},\quad Binxian Yuan,\quad Zuyi Zhang,\quad Jiuru Zhou}
\date{\small\today}

\maketitle

\begin{abstract}
We consider a special symmetric matrix and obtain a similar formula as the one obtained by Weyl's criterion. Some applications of the formula are given, where we give a new way to calculate the integral of $\ln\Gamma(x)$ on $[0,1]$, and we claim that one class of matrices are not Hadamard matrices.
\end{abstract}


\noindent{\bf MSC2010:} 11B57, 15B34.\\
{\bf Keywords:} Farey sequence; Weyl's criterion; Gamma function; Hadamard matrix.


\section{Main result}

We consider a special symmetric matrix $A_n$ as follows:
\begin{equation}\label{eqn:matrix_An}
A_n=\begin{pmatrix}
e^{\frac{1}{1}} & e^{\frac{1}{2}} & e^{\frac{1}{3}} & \cdots &  e^{\frac{1}{n}}\\
e^{\frac{1}{2}} &  e^{\frac{2}{2}} &  e^{\frac{2}{3}} & \cdots &  e^{\frac{2}{n}}\\
e^{\frac{1}{3}} &  e^{\frac{2}{3}} &  e^{\frac{3}{3}} & \cdots &  e^{\frac{3}{n}}\\
\vdots & \vdots & \vdots &  & \vdots\\
e^{\frac{1}{n}} &  e^{\frac{2}{n}} &  e^{\frac{3}{n}} & \cdots &  e^{\frac{n}{n}}\\
\end{pmatrix}.
\end{equation}
Let $\|A_n\|_m$ denote the $m$-norm of matrix $A_n$, then we have
\begin{prop}\label{prop:main_result}
\[
\lim_{n\rightarrow\infty}\frac{1}{n^2}\|A_n\|_{m}^{m}=\frac{1}{m}(e^m-1).
\]
\end{prop}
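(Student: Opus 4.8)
The plan is to read off the matrix entries, rewrite the $m$-norm as a double sum, and recognize that sum as a two-dimensional Riemann sum converging to a double integral. First I would note that, by the symmetry of $A_n$ and the given pattern, the $(i,j)$ entry is $e^{\min(i,j)/\max(i,j)}$, so that
\[
\|A_n\|_m^m=\sum_{i,j=1}^n e^{m\,\min(i,j)/\max(i,j)}.
\]
Dividing by $n^2$ and setting $x=i/n$, $y=j/n$, the key observation is that $\min(i,j)/\max(i,j)=\min(x,y)/\max(x,y)$, so
\[
\frac{1}{n^2}\|A_n\|_m^m=\frac{1}{n^2}\sum_{i,j=1}^n f\!\left(\tfrac{i}{n},\tfrac{j}{n}\right),\qquad f(x,y):=\exp\!\left(m\,\frac{\min(x,y)}{\max(x,y)}\right),
\]
which is precisely the Riemann sum of $f$ over $[0,1]^2$ with respect to the uniform grid, evaluated at the corners of each subsquare of side $1/n$.

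Next I would establish that this Riemann sum converges to $\iint_{[0,1]^2} f\,dx\,dy$. The function $g(x,y)=\min(x,y)/\max(x,y)$ is continuous on $[0,1]^2\setminus\{(0,0)\}$ and takes values in $[0,1]$, so $f$ is continuous there with $1\le f\le e^m$; at the origin $g$ has no limit (its value along $y=cx$ depends on $c$), so the only discontinuity of $f$ is the single point $(0,0)$, a set of measure zero. Hence $f$ is bounded and Riemann integrable on $[0,1]^2$, and grid Riemann sums converge to its integral. I expect the main obstacle to be the careful handling of this discontinuity: the rigorous route is an $\varepsilon$-argument isolating a small corner square $[0,\delta]^2$, on which both the partial Riemann sum and the partial integral are $O(\delta^2)$ because $f$ is bounded, while on the complement $f$ is uniformly continuous and the standard Riemann-sum convergence applies. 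Taking $\delta\to 0$ then yields convergence of the full sum to the integral.

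Finally I would evaluate the limiting integral. Using the symmetry $f(x,y)=f(y,x)$ to reduce to the triangle $x\le y$ (where $\min/\max=x/y$),
\[
\iint_{[0,1]^2} f\,dx\,dy=2\int_0^1\!\!\int_0^y e^{mx/y}\,dx\,dy=2\int_0^1\frac{y}{m}\,(e^m-1)\,dy=\frac{e^m-1}{m},
\]
where the inner integral is computed by the substitution $u=mx/y$. Combining this with the convergence established above gives $\lim_{n\to\infty}\frac{1}{n^2}\|A_n\|_m^m=\frac{1}{m}(e^m-1)$, as claimed.
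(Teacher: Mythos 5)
Your proof is correct, but it takes a genuinely different route from the paper's. The paper works entirely in one dimension: it writes $\|A_n\|_m^m$ as twice the lower-triangular sum minus the diagonal contribution, sets $a_k=\frac{1}{k}\sum_{j=1}^{k}e^{mj/k}$, identifies $\lim_k a_k=\int_0^1 e^{mx}\,dx=\frac{1}{m}(e^m-1)$ as a one-dimensional Riemann sum, and then concludes via a weighted Ces\`aro-type lemma (stated as a Claim: if $a_n\to b$ then $\frac{1}{n^2}\sum_{k\leqslant n}ka_k\to \frac{b}{2}$, a Stolz-type fact). You instead read the normalized sum as a single two-dimensional Riemann sum of $f(x,y)=e^{m\min(x,y)/\max(x,y)}$ over $[0,1]^2$, verify Riemann integrability (correctly locating the sole discontinuity at the origin and handling it by your $\varepsilon$--$\delta$ corner argument or Lebesgue's criterion), and evaluate the double integral by the symmetry $f(x,y)=f(y,x)$; your computation $2\int_0^1\frac{y}{m}(e^m-1)\,dy=\frac{1}{m}(e^m-1)$ checks out. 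The trade-off: your approach makes the limit manifestly an integral in one conceptual step, at the cost of invoking 2D integrability theory and care at $(0,0)$; the paper's iterated approach needs no discussion of discontinuities at all (the exponential enters only through 1D Riemann sums with $k\to\infty$) and, notably, transfers verbatim to the paper's Proposition 1.2 for an arbitrary Riemann integrable $f\in L^m[0,1]$, where your 2D route would additionally require showing that the cone over the discontinuity set of $f$ is null in $[0,1]^2$.
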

\begin{proof}
We first prove the case when $m=1$, the same method will apply to the general cases.
\[
\|A_n\|_1=2\sum_{k=1}^{n}\sum_{j=1}^{k}e^{\frac{j}{k}}-ne.
\]
To get the result
\begin{equation}\label{eqn:1-1}
\lim_{n\rightarrow\infty}\frac{1}{n^2}\|A_n\|_{1}=e-1,
\end{equation}
it is enough to prove that
\[
\lim_{n\rightarrow\infty}\frac{1}{n^2}\sum_{k=1}^{n}\sum_{j=1}^{k}e^{\frac{j}{k}}=\frac{1}{2}(e-1).
\]
Let $a_k=\frac{1}{k}\sum\limits_{j=1}^{k}e^{\frac{j}{k}}$, the left side of above equation can be written as
\[
\lim_{n\rightarrow\infty}\frac{1}{n^2}\sum_{k=1}^{n}ka_k.
\]
This form remind us the following result.

{\bf Claim.} Suppose $\lim\limits_{n\rightarrow\infty}a_n=b$, then
\[
\lim\limits_{n\rightarrow\infty}\frac{a_1+2a_2+\cdots+na_n}{n^2}=\frac{b}{2}.
\]

So, if we can prove that the limit $\lim\limits_{k\rightarrow\infty}\frac{1}{k}\sum\limits_{j=1}^{k}e^{\frac{j}{k}}$ exists, the question will be solved. In fact,
\[
\lim\limits_{k\rightarrow\infty}\frac{1}{k}\sum\limits_{j=1}^{k}e^{\frac{j}{k}}=\int_{0}^{1}e^x dx=e-1.
\]
Thus, the equation \eqref{eqn:1-1} has been proved. Similarly, using the following equation
\[
\lim\limits_{n\rightarrow\infty}\frac{1}{n}\sum\limits_{i=1}^{n}e^{\frac{mi}{n}}=\int_{0}^{1}e^{mx} dx=\frac{1}{m}(e^m-1),
\]
we get
\[
\lim_{n\rightarrow\infty}\frac{1}{n^2}\|A_n\|_{m}^{m}=\frac{1}{m}(e^m-1).
\]
\end{proof}
By observation of the above argument, we have used the Riemann integrability of the function $e^x$. So for any Riemann integrable function $f$ in $L^m[0,1]$, if we define symmetric matrix $A_{f,n}$ as following
\begin{equation}\label{eqn:matrix_Afn}
A_{f,n}=\begin{pmatrix}
f(1/1) & f(1/2) & f(1/3) &\cdots & f(1/n)\\
f(1/2) & f(2/2) & f(2/3) &\cdots & f(2/n)\\
f(1/3) & f(2/3) & f(3/3) &\cdots & f(3/n)\\
\vdots & \vdots & \vdots & & \vdots\\
f(1/n) & f(2/n) & f(3/n) &\cdots & f(n/n)\\
\end{pmatrix},
\end{equation}
we will have
\begin{prop}\label{prop:main_result2}
\[
\lim_{n\rightarrow\infty}\frac{1}{n^2}\|A_{f,n}\|_m^m=\lim_{n\rightarrow\infty}\frac{1}{n}\sum_{i=1}^{n}f^m(i/n)=\int_0^1f^m(x)dx.
\]
\end{prop}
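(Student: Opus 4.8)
The plan is to run the argument of Proposition \ref{prop:main_result} essentially verbatim, with $e^{x}$ replaced by the Riemann integrable function $g:=|f|^{m}$ (written $f^{m}$ in the statement). The only properties of $e^{x}$ that were actually used are its Riemann integrability and the convergence of its uniform right-endpoint Riemann sums, so the same scaffolding should transfer.

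First I would unpack the $m$-norm entrywise. Since the $(i,j)$ entry of $A_{f,n}$ is $f(\min(i,j)/\max(i,j))$ and the matrix is symmetric, I would write
\[
\|A_{f,n}\|_m^m=\sum_{i,j=1}^n g\!\left(\tfrac{\min(i,j)}{\max(i,j)}\right)=n\,g(1)+2\sum_{1\le i<j\le n} g(i/j),
\]
the first term coming from the diagonal entries $f(k/k)=f(1)$. After dividing by $n^{2}$, the diagonal part contributes $g(1)/n\to 0$, so everything reduces to the strict upper-triangular sum.

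Next I would introduce $a_k:=\frac1k\sum_{i=1}^k g(i/k)$, exactly as before. Riemann integrability of $g$ gives $\lim_{k\to\infty}a_k=\int_0^1 g(x)\,dx=\int_0^1 f^m(x)\,dx=:b$, which is precisely the middle equality of the statement (the uniform right-endpoint Riemann sum converging to the integral). Writing $\sum_{i=1}^{k-1}g(i/k)=k\,a_k-g(1)$ and summing over $k$ converts the upper-triangular sum into $\sum_{k=1}^n k\,a_k-n\,g(1)$. The Claim (Cesàro-type weighting) then yields $\frac1{n^2}\sum_{k=1}^n k\,a_k\to b/2$, whence $\frac{2}{n^2}\sum_{i<j}g(i/j)\to b$. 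Adding back the vanishing diagonal term gives $\frac1{n^2}\|A_{f,n}\|_m^m\to b=\int_0^1 f^m(x)\,dx$, which closes both equalities at once.

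The point requiring the most care is the convergence $a_k\to b$. If $f$ is properly Riemann integrable — hence bounded — this is the textbook fact that uniform tagged Riemann sums converge for any Riemann integrable function, and the remainder is routine. The genuine obstacle is the unbounded case secretly intended by the ``$L^m$'' hypothesis (the application to $\ln\Gamma$, which blows up at $0$): there $g=f^m$ is only improperly Riemann integrable, the sampled value $g(1/k)$ near the singular endpoint may be large, and one must verify both that $a_k\to b$ and that, after the extra weight $k$, these near-endpoint terms still contribute negligibly to $\frac1{n^2}\sum_k k\,a_k$. I would handle this by splitting each sum at a small fixed cutoff $\delta$, using improper integrability to bound the tail uniformly in $k$, and only then invoking the Claim on the truncated, bounded part before letting $\delta\to 0$.
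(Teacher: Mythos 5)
Your proof is correct and takes essentially the same approach as the paper, which in fact offers no separate proof of this proposition at all: it merely remarks that the argument of Proposition \ref{prop:main_result} (diagonal/off-diagonal splitting, the averages $a_k$, the Ces\`aro-type Claim, and convergence of uniform Riemann sums) transfers verbatim once $e^x$ is replaced by a Riemann integrable function, which is exactly what you carry out, with the welcome extra care about the diagonal term and the $|f|^m$ versus $f^m$ point. Your closing caveat is well spotted --- the paper later applies this proposition to the unbounded $\ln\Gamma$, where $f$ is only improperly integrable, without comment --- but be aware that your proposed fix overclaims slightly: improper integrability alone does not bound the near-singularity samples (a function with tall thin spikes at the points $1/k$ can have $a_k\not\to\int_0^1 g$ while remaining improperly integrable), so the $\delta$-cutoff argument needs an extra hypothesis such as monotonicity of $g$ near the singular endpoint, which $\ln\Gamma$ does satisfy.
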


\begin{remark}
It is worth to point out that, by Weyl's criterion\cite{Davenport-Erdos-LeVeque1963A, Weyl1916A}, for every Riemann integrable function $f$ on $(0,1)$, we have \cite{Kanemitsu-Yoshimoto1996}
\begin{equation}\label{eqn:by_Weyl_criterion}
\lim_{x\rightarrow\infty}\frac{1}{\Phi(x)}\sum_{\nu=1}^{\Phi(x)}f(\rho_\nu)=\int_{0}^{1}f(t)dt,
\end{equation}
where $\rho_\nu$, $\nu=1,2,\ldots,\Phi(x)$ are elements in the Farey series $\mathcal{F}_x$ of order $x$ which is defined as
\[
\mathcal{F}_x=\{\rho_\nu=\frac{b_\nu}{c_\nu}\mid 0<b_\nu\leqslant c_\nu\leqslant x,\ (b_\nu,c_\nu)=1\}.
\]
$\Phi(x)$ denotes the number of terms in $\mathcal{F}_x$. It is proved that \cite{Hardy-Wright2008B}
\[
\Phi(x)=\sum_{n\leqslant x}\phi(n)=\frac{3}{\pi^2}x^2+O(x\log x).
\]
From this, we can easily to see that the probability that two randomly- and independently-chosen integers are coprime is given by $\frac{1}{\zeta(2)}=\frac{6}{\pi^2}$. In \cite[Theorem 9.5]{Jones}, it is proved in the view of the chosen of random integer and interpreted from the geometric point which is just the probability of two integer point can be seen directly in the lattice. By this observation, we see that Proposition \ref{prop:main_result2} and \eqref{eqn:by_Weyl_criterion} are equivalent.
\end{remark}

\section{Eigenvalues of the matrix}

Let $\lambda_1,\ldots,\lambda_n$ be the eigenvalues of the matrix $A_n$ defined in \eqref{eqn:matrix_An}, then
\[
\sum\limits_{i=1}^{n}\lambda_i=\text{Tr}(A_n)=ne,\quad \det(A_n)=\prod\limits_{i=1}^n\lambda_i.
\]

\begin{prop}
\[
\lim_{n\rightarrow\infty}\frac{1}{n^2}\sum_{i=1}^{n}\lambda_i^2=\frac{1}{2}(e^2-1).
\]
\end{prop}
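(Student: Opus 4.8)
The plan is to express $\sum_i \lambda_i^2$ in terms of the matrix entries and then invoke the $m=2$ case of Proposition~\ref{prop:main_result}.The plan is to reduce this statement directly to the $m=2$ case of Proposition~\ref{prop:main_result}. The bridge is the standard identity relating the sum of squares of eigenvalues of a symmetric matrix to the trace of its square: since $A_n$ is real and symmetric, its eigenvalues $\lambda_i$ are real, the eigenvalues of $A_n^2$ are exactly $\lambda_i^2$, and hence
\[
\sum_{i=1}^{n}\lambda_i^2=\operatorname{Tr}(A_n^2).
\]
First I would expand the trace in terms of matrix entries. Writing $(A_n)_{ij}$ for the entries and using $(A_n^2)_{ii}=\sum_{j}(A_n)_{ij}(A_n)_{ji}$, together with the symmetry $(A_n)_{ji}=(A_n)_{ij}$, gives
\[
\operatorname{Tr}(A_n^2)=\sum_{i,j}(A_n)_{ij}^2=\|A_n\|_2^2,
\]
which is precisely the entrywise $2$-norm appearing in Proposition~\ref{prop:main_result}.

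With these two observations combined, $\frac{1}{n^2}\sum_i\lambda_i^2=\frac{1}{n^2}\|A_n\|_2^2$ for every $n$, so I would simply pass to the limit and quote Proposition~\ref{prop:main_result} with $m=2$, which yields $\frac{1}{2}(e^2-1)$. No separate convergence argument is needed because the equality holds at the level of each finite $n$; the limit is inherited verbatim from the already-established $m$-norm result.

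I do not expect a genuine obstacle here, since the whole proof rests on the well-known fact $\operatorname{Tr}(A^2)=\sum\lambda_i^2$ for symmetric $A$ plus the entrywise interpretation of $\|\cdot\|_m$. The only points worth stating carefully are (i) that symmetry of $A_n$ is what makes $\operatorname{Tr}(A_n^2)$ coincide with $\sum_{i,j}(A_n)_{ij}^2$ rather than with $\sum_{i,j}(A_n)_{ij}(A_n)_{ji}$ in general, and (ii) that the $m$-norm in Proposition~\ref{prop:main_result} is indeed the entrywise norm $\|A\|_m^m=\sum_{i,j}|a_{ij}|^m$, which one can confirm by checking the $m=1$ computation given there against the sum of all entries of $A_n$. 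Once those identifications are in place, the result is immediate.
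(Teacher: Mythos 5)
Your argument is exactly the paper's own proof: the paper likewise sets $B_n=A_n^2$, uses symmetry to identify $\operatorname{Tr}(B_n)=\sum_{i=1}^n\lambda_i^2$ with the entrywise sum $\|A_n\|_2^2$, and then invokes Proposition~\ref{prop:main_result} with $m=2$. Your added remarks on why symmetry matters and on the entrywise meaning of $\|\cdot\|_m$ are correct clarifications but do not change the route.
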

\begin{proof}
Let $B_n=A_n^2$, then
\[
\begin{split}
\sum_{i=1}^{n}\lambda_i^2&=\text{Tr}(B_n)\\
&=e^{2\cdot 1}+e^{2\cdot{\frac{1}{2}}}+\cdots+e^{2\cdot{\frac{1}{n}}}\\
&\quad+e^{2\cdot\frac{1}{2}}+e^{2\cdot{\frac{2}{2}}}+\cdots+e^{2\cdot{\frac{2}{n}}}\\
&\quad+\cdots\\
&\quad+e^{2\cdot\frac{1}{n}}+e^{2\cdot{\frac{2}{n}}}+\cdots+e^{2\cdot{\frac{n}{n}}}\\
&=\|A_n\|_2^2.
\end{split}
\]
By Proposition \ref{prop:main_result}, the result follows.
\end{proof}

It can be generalized to the $L^2$ functions defined on $[0,1]$.
\begin{prop}\label{prop:eigenvalues}
If $\lambda_1,\ldots,\lambda_n$ are the eigenvalues of the matrix $A_{f,n}$, where $f$ is a Riemann integrable function in $L^2[0,1]$, then we have
\[
\lim_{n\rightarrow\infty}\frac{1}{n^2}\sum_{i=1}^{n}\lambda_i^2=\int_0^1 f^2(x)dx.
\]
\end{prop}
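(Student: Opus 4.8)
The plan is to reduce the statement to Proposition \ref{prop:main_result2} with $m=2$, exactly as the proof of the special case $f(x)=e^x$ reduced the sum of squared eigenvalues to the entrywise $2$-norm. The starting point is the structural observation that $A_{f,n}$ is a real symmetric matrix: its $(i,j)$ entry is $f(\min(i,j)/\max(i,j))$, which is invariant under swapping $i$ and $j$. Consequently all eigenvalues $\lambda_1,\ldots,\lambda_n$ are real, and the spectral identity $\sum_{i=1}^n \lambda_i^2 = \text{Tr}(A_{f,n}^2)$ holds.

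Next I would rewrite the trace of the square as the sum of squares of all entries. For any real symmetric matrix $A$ one has $\text{Tr}(A^2)=\sum_{i,j} a_{ij}a_{ji}=\sum_{i,j} a_{ij}^2$, so in our case $\text{Tr}(A_{f,n}^2)=\sum_{i,j} f(\min(i,j)/\max(i,j))^2 = \|A_{f,n}\|_2^2$, which is precisely the entrywise $2$-norm squared appearing in Proposition \ref{prop:main_result2}. This is the same bookkeeping that produced $\|A_n\|_2^2$ in the preceding proposition, now carried out for a general $f$ rather than for $e^x$.

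Finally I would invoke Proposition \ref{prop:main_result2} with exponent $m=2$ to conclude that $\lim_{n\to\infty}\frac{1}{n^2}\|A_{f,n}\|_2^2 = \int_0^1 f^2(x)\,dx$, and combining this with the two displays above yields the claim. The only point requiring care — and the closest thing to an obstacle in an otherwise immediate argument — is verifying that the hypotheses of Proposition \ref{prop:main_result2} are actually met at $m=2$, namely that $f^2$ is Riemann integrable on $[0,1]$. This follows because a Riemann integrable function on $[0,1]$ is bounded, and the square of a bounded Riemann integrable function is again Riemann integrable; together with the assumption $f\in L^2[0,1]$ this guarantees that the right-hand limit exists and equals $\int_0^1 f^2(x)\,dx$. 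No analytic input beyond Proposition \ref{prop:main_result2} and the symmetry of $A_{f,n}$ is needed.
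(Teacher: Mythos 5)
Your proposal is correct and follows essentially the same route as the paper: the paper proves the special case $f(x)=e^x$ by writing $\sum_{i=1}^n\lambda_i^2=\mathrm{Tr}(A_n^2)=\|A_n\|_2^2$ and invoking Proposition~\ref{prop:main_result}, and then asserts the general statement as the identical computation combined with Proposition~\ref{prop:main_result2} at $m=2$. Your explicit verification that $f^2$ is Riemann integrable (boundedness of $f$ plus stability of Riemann integrability under squaring) is a detail the paper leaves implicit, and it is handled correctly.
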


\section{Applications}

\subsection{Integral of $\ln\Gamma(x)$ on $[0,1]$}

Let $f(x)=\ln\Gamma(x)$, where $\Gamma(x)$ is the Gamma function. Then
\[
A_{f,n}=\begin{pmatrix}
\ln\Gamma(1/1) & \ln\Gamma(1/2) & \ln\Gamma(1/3) &\cdots & \ln\Gamma(1/n)\\
\ln\Gamma(1/2) & \ln\Gamma(2/2) & \ln\Gamma(2/3) &\cdots & \ln\Gamma(2/n)\\
\ln\Gamma(1/3) & \ln\Gamma(2/3) & \ln\Gamma(3/3) &\cdots & \ln\Gamma(3/n)\\
\vdots & \vdots & \vdots & & \vdots\\
\ln\Gamma(1/n) & \ln\Gamma(2/n) & \ln\Gamma(3/n) &\cdots & \ln\Gamma(n/n)\\
\end{pmatrix}.
\]
By Proposition \ref{prop:main_result},
\[
\lim_{n\rightarrow\infty}\frac{1}{n^2}\|A_{f,n}\|_1^1=\int_0^1\ln\Gamma(x)dx.
\]
Boya \cite{Boya2008A} take a note that the right hand side is $\ln\sqrt{2\pi}$ which is an exercise in \cite{Whittaker-Watson1962B} and \cite{Mei2011B}. In fact it can be proved by Euler formula \eqref{eqn:Euler_formula}. Here we give another method to give this result, nevertheless, we still use the Euler formula of Gamma function.

We calculate the left side. Note that $\Gamma(1)=1$, $\Gamma(\frac{1}{2})=\sqrt{\pi}$, we have
\[
\begin{split}
\|A_{f,n}\|_1^1&=2\sum_{k=1}^{n}\sum_{j=1}^{k}\ln\Gamma(\frac{j}{k})\\
&=2\sum_{k=1}^{n}\ln\prod_{j=1}^{k-1}\Gamma(\frac{j}{k})\\
&=2\sum_{k=1}^{n}\ln\Gamma(\frac{1}{k})\Gamma(\frac{2}{k})\cdots\Gamma(\frac{k-1}{k})\\
\end{split}
\]
Using the Euler formula \cite[Thm 16.3.6]{Mei2011B}
\begin{equation}\label{eqn:Euler_formula}
\Gamma(s)\Gamma(1-s)=\frac{\pi}{\sin\pi s},\quad\text{for}\ 0<s<1.
\end{equation}
We have
\[
\begin{split}
\Gamma(\frac{1}{k})\Gamma(\frac{2}{k})\cdots\Gamma(\frac{k-1}{k})
&=\begin{cases}
\frac{\pi}{\sin\frac{\pi}{k}}\cdot\frac{\pi}{\sin\frac{2\pi}{k}}\cdots\frac{\pi}{\sin\frac{\pi(k-1)/2}{k}},&\text{if}\ k\ \text{is}\ \text{odd},\\
\frac{\pi}{\sin\frac{\pi}{k}}\cdot\frac{\pi}{\sin\frac{2\pi}{k}}\cdots\frac{\pi}{\sin\frac{\pi(k-2)/2}{k}}\cdot\Gamma(\frac{1}{2}),&\text{if}\ k\ \text{is}\ \text{even},\\
\end{cases}\\
&=\begin{cases}
\frac{\pi^{(k-1)/2}}{\prod_{j=1}^{(k-1)/2}\sin\frac{j\pi}{k}},&\text{if}\ k\ \text{is}\ \text{odd},\\
\frac{\pi^{(k-1)/2}}{\prod_{j=1}^{(k-2)/2}\sin\frac{j\pi}{k}},&\text{if}\ k\ \text{is}\ \text{even},\\
\end{cases}
\end{split}
\]
Note that, when $k$ is odd, we have
\[
\prod_{j=1}^{(k-1)/2}\sin\frac{j\pi}{k}=\frac{\sqrt{k}}{\sqrt{2}^{k-1}},
\]
since we have the following Lemma \ref{lem:sin_prod}.

\begin{lem}[\cite{Mei2011B}, Example 8.4.6]\label{lem:sin_prod}
\[
2n+1=2^{2n}\prod_{k=1}^{n}\sin^2\frac{k\pi}{2n+1}.
\]
\end{lem}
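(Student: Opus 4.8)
The plan is to derive the stated identity from the classical product formula
\[
\prod_{k=1}^{m-1}\sin\frac{k\pi}{m}=\frac{m}{2^{m-1}},
\]
specialized to the odd modulus $m=2n+1$. Once this formula is available, the reflection symmetry $\sin\frac{(m-k)\pi}{m}=\sin\frac{k\pi}{m}$ pairs the factor indexed by $k$ with the one indexed by $m-k$; since $m-1=2n$ is even, the full product over $k=1,\dots,2n$ has no unpaired middle term and collapses into $\prod_{k=1}^{n}\sin^2\frac{k\pi}{2n+1}$. Combining this with the formula gives $\prod_{k=1}^{n}\sin^2\frac{k\pi}{2n+1}=\frac{2n+1}{2^{2n}}$, which is exactly the asserted identity after clearing the power of two.

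To establish the product formula itself, I would work with the $m$-th roots of unity $\omega^k$, where $\omega=e^{2\pi i/m}$ and $k=0,1,\dots,m-1$. Factoring $z^m-1=\prod_{k=0}^{m-1}(z-\omega^k)$ and dividing by $z-1$ yields
\[
1+z+\cdots+z^{m-1}=\prod_{k=1}^{m-1}(z-\omega^k),
\]
and evaluating at $z=1$ gives $m=\prod_{k=1}^{m-1}(1-\omega^k)$. The key computation is then $|1-\omega^k|=|1-e^{2\pi i k/m}|=2\sin\frac{k\pi}{m}$ for $0<k<m$, obtained by factoring out $e^{\pi i k/m}$ and recognizing the resulting difference of exponentials as $2i\sin\frac{k\pi}{m}$. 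Taking absolute values on both sides of $m=\prod_{k=1}^{m-1}(1-\omega^k)$ delivers $m=2^{m-1}\prod_{k=1}^{m-1}\sin\frac{k\pi}{m}$, and hence the formula.

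The only genuine subtlety, and the step I would treat most carefully, is the passage to absolute values: one must check that each $\sin\frac{k\pi}{m}$ is strictly positive for $1\le k\le m-1$ (true since $0<\frac{k\pi}{m}<\pi$), so that the modulus of the product equals the product of the individual moduli with no stray sign, and that the left-hand side $m$ is real and positive so that $|m|=m$. After that, the reduction to the squared form via the reflection symmetry is pure bookkeeping, and specializing $m=2n+1$ completes the argument.
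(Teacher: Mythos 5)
Your proof is correct, but it takes a genuinely different route from the one the paper relies on. The paper (following Mei's Example 8.4.6, the same method it reproduces in its appendix for the even case) works over the reals: it expands $\sinh((2n+1)x)$ via $e^{(2n+1)x}=(\sinh x+\cosh x)^{2n+1}$ to exhibit it as $\cosh x\sinh x$ times a polynomial $P(y)$ in $y=\sinh^2 x$, then substitutes $x\mapsto\sqrt{-1}\,x$ to identify the roots of $P$ as $-\sin^2\frac{k\pi}{2n+1}$, and finally compares the leading coefficient (a binomial sum equal to a power of $2$) with the constant term to extract the identity. You instead factor $z^m-1$ over the $m$-th roots of unity, evaluate $\prod_{k=1}^{m-1}(1-\omega^k)=m$ at $z=1$, compute $|1-e^{2\pi ik/m}|=2\sin\frac{k\pi}{m}$, and fold the resulting classical product $\prod_{k=1}^{m-1}\sin\frac{k\pi}{m}=m/2^{m-1}$ by the reflection symmetry $\sin\frac{(m-k)\pi}{m}=\sin\frac{k\pi}{m}$ with $m=2n+1$; your care about the positivity of each sine and the modulus-of-product step is exactly where the argument needs care, and it holds. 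Your approach has a concrete advantage here: taking $m=2n$ instead, the middle factor $\sin\frac{n\pi}{2n}=1$ absorbs into the square, and the same pairing immediately yields $2n=2^{2n-1}\prod_{k=1}^{n}\sin^2\frac{k\pi}{2n}$, i.e.\ the identity \eqref{eqn:sin_prod_even} that the paper proves separately in its appendix --- so one uniform complex-analytic computation replaces both the cited lemma and the appendix. What the paper's method buys in exchange is independence from complex factorization of $z^m-1$ (it needs only binomial expansions and counting roots of a real polynomial), which is why a real-analysis textbook like the cited source prefers it.
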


Using the same method in \cite[Example 8.4.6]{Mei2011B}, we can prove the following identity. The proof is given in the appendix.
\begin{equation}\label{eqn:sin_prod_even}
2n=2^{2n-1}\prod_{k=1}^{n}\sin^2\frac{k\pi}{2n}.
\end{equation}
Therefore, for $k$ even, we also have
\[
\prod_{j=1}^{k/2}\sin\frac{j\pi}{k}=\frac{\sqrt{k}}{\sqrt{2}^{k-1}}.
\]
That is, for any positive integer $k$,
\[
\prod_{j=1}^{k-1}\Gamma(\frac{j}{k})=\frac{\pi^{(k-1)/2}}{\frac{\sqrt{k}}{\sqrt{2}^{k-1}}}=\frac{(\sqrt{2\pi})^{k-1}}{\sqrt{k}}.
\]
Therefore,
\[
\begin{split}
\lim_{n\rightarrow\infty}\frac{1}{n^2}\|A_{f,n}\|_1^1&=\lim_{n\rightarrow\infty}\frac{1}{n^2}2\ln\prod_{k=1}^{n}\prod_{j=1}^{k-1}\Gamma(\frac{j}{k})\\
&=\lim_{n\rightarrow\infty}\frac{2}{n^2}\ln\prod_{k=1}^{n}\frac{(\sqrt{2\pi})^{k-1}}{\sqrt{k}}\\
&=\lim_{n\rightarrow\infty}\frac{1}{n^2}\ln\prod_{k=1}^{n}\frac{(2\pi)^{k-1}}{k}\\
&=\lim_{n\rightarrow\infty}\frac{1}{n^2}\bigl[\ln(2\pi)^{n(n-1)/2}-\ln n!\bigr]\\
&=\ln\sqrt{2\pi}.
\end{split}
\]
Hence, we get
\[
\int_0^1\ln\Gamma(x)dx=\ln\sqrt{2\pi}.
\]

\begin{remark}
There are many ways to obtain the above equation. For example, using the formula for $\Gamma(x)$
\[
\sqrt{\pi}\Gamma(2z)=2^{2z-1}\Gamma(z)\Gamma(z+\frac{1}{2}),
\]
we have
\[
\ln\Gamma(2z)=(2z-1)\ln 2-\ln\sqrt{\pi}+\ln\Gamma(z)+\ln\Gamma(z+\frac{1}{2}).
\]
Then take the integral on $[0,\frac{1}{2}]$,
\[
\int_{0}^{1/2}\ln\Gamma(2z)dz=\ln 2\int_{0}^{1/2}(2z-1)dz-\frac{1}{2}\ln\sqrt{\pi}+\int_{0}^{1/2}\ln\Gamma(z)dz+\int_{0}^{1/2}\ln\Gamma(z+\frac{1}{2})dz
\]
By changing the variables, we have
\[
\frac{1}{2}\int_{0}^{1}\ln\Gamma(x)dx=-\frac{1}{4}\ln 2-\frac{1}{4}\ln\pi+\int_{0}^{1}\ln\Gamma(x)dx.
\]
Hence, we get
\[
\int_0^1\ln\Gamma(x)dx=\ln\sqrt{2\pi}.
\]
\end{remark}

\subsection{Application to Hadamard matrix}

\begin{prop}
Assume that $A_{n}$ is a $n$-square matrix ($n=4k$) with elements either $+1$ or $-1$. For $n$ large enough, if it can be realized by a Riemann integrable function $f$ defined on $[0,1]$ as \eqref{eqn:matrix_Afn}, where $|f(t)|\leqslant 1$, then it is not a Hadamard matrix.
\end{prop}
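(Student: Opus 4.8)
The plan is to argue by contradiction: suppose that $A_{f,n}$ is a Hadamard matrix for infinitely many $n$. Since $A_{f,n}$ is symmetric by construction, the Hadamard condition $A_{f,n}A_{f,n}^{T}=nI$ becomes $A_{f,n}^{2}=nI$, so every eigenvalue satisfies $\lambda_i^{2}=n$, i.e. $\lambda_i=\pm\sqrt n$ and $\sum_{i=1}^n\lambda_i^{2}=n^{2}$. Hence $\frac{1}{n^{2}}\sum_{i=1}^n\lambda_i^{2}=1$ for each such $n$, and Proposition \ref{prop:eigenvalues} forces $\int_0^1 f^{2}(x)\,dx=1$. Combined with the hypothesis $|f|\le 1$ (so $f^{2}\le 1$ pointwise), this gives $f^{2}=1$, that is $|f|=1$, almost everywhere on $[0,1]$. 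Moreover every entry of $A_{f,n}$ equals $\pm 1$, so $f(i/j)\in\{-1,+1\}$ for all the arguments $i/j$ that occur; letting $n\to\infty$ along the Hadamard orders, $f=\pm 1$ at every rational of $(0,1]$. Because $f$ is Riemann integrable its discontinuity set $D$ has Lebesgue measure zero, and at each continuity point $x$ one has $|f(x)|=1$ together with local sign-constancy: there is $\delta_x>0$ with $f$ of constant sign on $(x-\delta_x,x+\delta_x)$.

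The contradiction will come from the fact that two consecutive rows of $A_{f,n}$ are nearly parallel. The last row is $(f(1/n),\dots,f((n-1)/n),f(1))$ and the penultimate row is $(f(1/(n-1)),\dots,f((n-2)/(n-1)),f(1),f((n-1)/n))$, so their inner product is
\[
S_n=\sum_{k=1}^{n-2}f\!\left(\tfrac{k}{n}\right)f\!\left(\tfrac{k}{n-1}\right)+2f(1)\,f\!\left(\tfrac{n-1}{n}\right).
\]
Since all these values are $\pm 1$, for each $k$ the product $f(k/n)f(k/(n-1))$ equals $+1$ unless the two signs disagree, in which case it equals $-1$. Writing $B_n$ for the number of indices $k\le n-2$ with $f(k/n)\ne f(k/(n-1))$, I get $S_n=(n-2)-2B_n+2f(1)f((n-1)/n)$, hence $S_n\ge n-4-2B_n$. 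As the two sample points $k/n$ and $k/(n-1)$ differ by at most $1/(n-1)$, a sign disagreement can occur only when $f$ fails to be locally sign-constant across a window of width $O(1/n)$ around $k/n$; by the local sign-constancy above this can happen only for $k/n$ lying near the measure-zero set $D$.

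The crux is therefore the estimate $B_n=o(n)$, which I would obtain from Riemann integrability as follows. Fix an auxiliary uniform partition of $[0,1]$ into cells of width $w_n$ with $1/n\ll w_n\to 0$ (e.g. $w_n=1/\lfloor\sqrt n\rfloor$), so that both $k/n$ and $k/(n-1)$ lie in a common cell for all but $O(\sqrt n)$ indices $k$. Whenever a disagreement occurs with both points in one cell $C$, the oscillation of $f$ on $C$ is at least $2$; and since $f$ is Riemann integrable the difference $U-L$ of upper and lower sums for this partition tends to $0$, which bounds the number of cells of oscillation $\ge 2$ by $o(1/w_n)$ and hence the number of offending indices by $o(n)$. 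Putting this together, $S_n=(n-2)-2B_n+O(1)=n-o(n)$, which is strictly positive for all large $n$. This contradicts the orthogonality of distinct rows of a Hadamard matrix, so $A_{f,n}$ cannot be Hadamard once $n$ is large. The main obstacle is exactly this counting step: translating the measure-zero of the discontinuity set into the quantitative bound $B_n=o(n)$, uniformly as the grid $k/n$ is shifted to $k/(n-1)$.
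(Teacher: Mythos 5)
Your proposal is correct and follows essentially the same route as the paper: the trace/eigenvalue step via Proposition \ref{prop:eigenvalues} forcing $f^2=1$ a.e., then the key observation that orthogonality of the last two rows forces roughly $n/2$ sign disagreements between $f(k/n)$ and $f(k/(n-1))$, which contradicts Riemann integrability through an oscillation estimate. The only difference is bookkeeping: the paper works directly with the mesh-$\frac{1}{n}$ partition, noting $[\frac{k}{n},\frac{k}{n-1}]\subset[\frac{k}{n},\frac{k+1}{n}]$ so each disagreement contributes oscillation $2$ and $S-s>\frac{1}{2}$ outright, whereas you pass to the contrapositive via an auxiliary partition of width $\sim n^{-1/2}$ to get $B_n=o(n)$ --- a slightly longer path to the same contradiction.
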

\begin{proof}
Suppose $A_n$ is a Hadamard matrix. If $\lambda_1,\ldots,\lambda_n$ are eigenvalues of $A_n$, then  $\sum_{i=1}^{n}\lambda_i^2=n^2$ because Hadamard matrix obeys the formula $A_n\cdot A_n^T=nI_n$. By Proposition \ref{prop:eigenvalues},
\[
\int_0^1 f^2(t)dt=\lim_{n\rightarrow\infty}\frac{1}{n^2}\sum_{i=1}^{n}\lambda_i^2=1.
\]
Therefore, $f(t)=\pm 1$ almost everywhere in $[0,1]$. From the following diagram, it is easy to see that, the numbers are sorted in the order of the projection by the dashed lines to the bottom line.

\begin{center}
\begin{tikzpicture}[scale=1]
\draw [help lines] (0,0) grid (9,9);
\draw[gray,dashed] (0,9) -- (9,0);

\fill [red] ($(0,8)$) circle (2pt);
\fill [red] ($(1,6)$) circle (2pt);
\fill [red] ($(2,4)$) circle (2pt);
\fill [red] ($(3,2)$) circle (2pt);
\fill [red] ($(4,0)$) circle (2pt);
\draw[gray,dashed] (0,8) -- (4,0);

\fill [green] ($(0,7)$) circle (2pt);
\fill [green] ($(1,4)$) circle (2pt);
\fill [green] ($(2,1)$) circle (2pt);
\draw[gray,dashed] (0,7) -- (7/3,0);

\fill [blue] ($(1,7)$) circle (2pt);
\fill [blue] ($(3,4)$) circle (2pt);
\fill [blue] ($(5,1)$) circle (2pt);
\draw[gray,dashed] (1,7) -- (17/3,0);

\fill [black] ($(2,6)$) circle (2pt);
\fill [black] ($(5,2)$) circle (2pt);
\draw[gray,dashed] (2,6) -- (13/2,0);

\fill [yellow] ($(0,6)$) circle (2pt);
\fill [yellow] ($(1,2)$) circle (2pt);
\draw[gray,dashed] (0,6) -- (3/2,0);


\node at (0,9)[anchor=east] {$\frac{1}{1}$};
\foreach \i in{2,3,...,10}
\foreach \j in{1,2,...,\i}
\node  at (\j-1,10-\i)[anchor=east] {$\frac{\j}{\i}$};
\end{tikzpicture}
\end{center}

We consider the last two rows in the matrix $A_n$. These two rows contain $n/2$ different elements since they are orthogonal. Let
\[
I_0=\Bigl\{i\ \Big|\ i\leqslant n-1,\quad f(\frac{i}{n-1})f(\frac{i}{n})=-1\Bigr\},
\]
then $\# I_0\geqslant\frac{n}{2}-1$. Consider the following division of the interval $[0,1]$,
\[
0<\frac{1}{n}<\frac{2}{n}<\cdots <\frac{n}{n}=1.
\]
Note that $[\frac{i}{n},\frac{i}{n-1}]\subset[\frac{i}{n},\frac{i+1}{n}]$, for $i\in I_0$.
Therefore the oscillations of function $f$ on such intervals are equal to $2$. Then, if let $M_i=\sup_{x\in[\frac{i}{n},\frac{i+1}{n}]}f(x)$ and $m_i=\inf_{x\in[\frac{i}{n},\frac{i+1}{n}]}f(x)$, we will have
\[
S-s=\sum_{i=1}^{n}(M_i-m_i)\frac{1}{n}\geqslant 2(\frac{n}{2}-1)\frac{1}{n}>\frac{1}{2}.
\]
Hence the limit of $S-s$ does not equal to zero while $n\rightarrow\infty$. It is a contradiction to that $f$ is a Riemann integrable function.
\end{proof}

\section{appendix}

\noindent{\bf Proof of the formula \eqref{eqn:sin_prod_even}}. Note that
\[
\begin{split}
e^{2nx}&=(\sinh x+\cosh x)^{2n}=\sum_{k=0}^{2n}C_{2n}^{k}(\cosh x)^k(\sinh x)^{2n-k}\\
&=\sum_{\ell=0}^{n}C_{2n}^{2\ell}(\cosh x)^{2\ell}(\sinh x)^{2n-2\ell}+\sum_{\ell=0}^{n-1}C_{2n}^{2\ell+1}(\cosh x)^{2\ell+1}(\sinh x)^{2n-(2\ell+1)}.
\end{split}
\]
On the other hand,
\[
e^{2nx}=\cosh(2nx)+\sinh(2nx).
\]
Hence we get
\[
\begin{split}
\sinh(2nx)&=\cosh x\sinh x\sum_{\ell=0}^{n-1}C_{2n}^{2\ell+1}(\cosh x)^{2\ell}(\sinh x)^{2(n-\ell-1)}\\
&=\cosh x\sinh x\sum_{\ell=0}^{n-1}C_{2n}^{2\ell+1}(\sinh^2 x+1)^{\ell}(\sinh^2 x)^{n-\ell-1}.\\
\end{split}
\]
Note that $\sinh(\sqrt{-1}x)=\sqrt{-1}\sin x$, substitute $x$ by $\sqrt{-1}x$ into the above formula, we see that $x=\frac{k\pi}{2n}$ are the zeros of $\sinh(2n\sqrt{-1}x)$. Thus polynomial
\[
P(y):=\sum_{\ell=0}^{n-1}C_{2n}^{2\ell+1}(y+1)^\ell y^{n-\ell-1}
\]
have the following roots
\[
-\sin^2\frac{k\pi}{2n},\quad k=1,2,\ldots,n.
\]
These are the only $n$ roots of the polynomial $P(y)$. Therefore,
\[
P(y)=C\prod_{k=1}^{n}(y+\sin^2\frac{k\pi}{2n}),
\]
where $C$ is the coefficient of top item of polynomial. Thus
\[
\begin{split}
C&=\sum_{\ell=0}^{n-1}C_{2n}^{2\ell+1}=C_{2n}^1+C_{2n}^3+\cdots+C_{2n}^{2n-1}\\
&=\frac{1}{2}\sum_{\ell=0}^{2n}C_{2n}^{\ell}=2^{2n-1}.
\end{split}
\]
Here we use the identity
\[
C_n^1+C_n^3+\cdots=C_n^0+C_n^2+\cdots.
\]
By comparing the constant item in the polynomials, we have
\[
2n=2^{2n-1}\prod_{k=1}^{n}\sin^2\frac{k\pi}{2n}.
\]



\bigskip

\noindent Haifeng Xu\\
School of Mathematical Science\\
Yangzhou University\\
Jiangsu China 225002\\
hfxu@yzu.edu.cn\\
\medskip

\noindent Binxian Yuan\\
School of Mathematical Science\\
Yangzhou University\\
Jiangsu China 225002\\
yuanbinxian@gmail.com\\
\medskip

\noindent Zuyi Zhang\\
Department of Mathematics\\
Suqian College\\
Jiangsu China 225002\\
zhangzuyi1993@hotmail.com\\
\medskip

\noindent Jiuru Zhou\\
School of Mathematical Science\\
Yangzhou University\\
Jiangsu China 225002\\
zhoujr1982@hotmail.com\\
\medskip

\end{document}